\newtheorem{theorem}{Theorem}[section]
\newtheorem*{theorem A}{Theorem A}
\newtheorem*{theorem B}{N\"olker's Theorem}
\newtheorem{lemma}{Lemma}[section]
\newtheorem{proposition}{Proposition}[section]
\newtheorem{corollary}{Corollary}[section]
\theoremstyle{remark}
\newtheorem{remark}{Remark}[section]
\theoremstyle{remark}
\theoremstyle{definition}
\numberwithin{equation}{section}
\def\({\left ( }
\def\){\right )}
\def\<{\left < }
\def\>{\right >}
\begin{document}

\title{ Ricci solitons in almost $f$-cosymplectic manifolds }

\author{Xiaomin Chen}
%    Address of record for the research reported here
\address{College of  Science, China University of Petroleum-Beijing, Beijing, 102249, China}
\email{xmchen@cup.edu.cn}

\thanks{The author is supported by the Science Foundation of China
University of Petroleum-Beijing(No.2462015YQ0604) and partially
by  the Personnel Training and Academic
Development Fund (2462015QZDX02).}

%SRFDP of China (Grant No. 20050027025).

\begin{abstract}
In this article we study an almost $f$-cosymplectic manifold admitting a Ricci soliton. We first prove that there do not exist Ricci solitons on an almost cosymplectic $(\kappa,\mu)$-manifold. Further, we consider an almost
$f$-cosymplectic manifold admitting a Ricci soliton whose potential vector field is the Reeb vector field and show that
a three dimesional almost $f$-cosymplectic is a cosymplectic manifold. Finally we classify a three dimensional $\eta$-Einstein almost $f$-cosymplectic manifold admitting a Ricci soliton.
\end{abstract}

\keywords{ Ricci soliton;  almost $f$-cosymplectic manifold; almost cosymplectic manifold; Einstein manifold;
 $(\kappa,\mu)$-manifold.}

\subjclass[2010]{53D15; 53C21; 53C25}

\maketitle

\section{Introduction}

A \emph{Ricci soliton} is a Riemannian metric defined on manifold $M$ such that
\begin{equation}\label{1.1}
 \frac{1}{2}\mathcal{L}_V g+Ric-\lambda g=0,
\end{equation}
where $V$ and $\lambda$ are the potential vector field and some constant on $M$, respectively.
Moreover, the Ricci soliton is called
 \emph{shrinking, steady }and \emph{expanding} according as $\lambda$ is positive, zero and negative respectively.
  The Ricci solitons are of interest to physicists as well and are
known as quasi Einstein metrics in the physics literature
\cite{F}. Compact Ricci solitons are the
fixed point of the Ricci flow:$\frac{\partial}{\partial t}g=-2Ric$,
projected from the space of metrics onto its quotient modulo diffeomorphisms and scalings, and often
arise as blow-up limits for the Ricci flows on compact manifolds. The study on the Ricci solitons
has a long history and a lot of achievements were acquired, see \cite{FG,MS,PW}etc.
On the other hand,  the normal almost contact manifolds admitting Ricci solitons were also been studied by many researchers (see \cite{C2,G,GS,GS2}).

Recently, we note that the three dimensional almost Kenmotsu manifolds admitting Ricci solitons were considered (\cite{WL,WDL}) and Cho (\cite{C1}) gave the classification of an almost cosymplectic manifold admitting a Ricci soliton whose potential vector field is the Reeb vector field.
Here the \emph{almost cosymplectic manifold}, defined by Goldberg and Yano \cite{GY}, was an almost contact manifold whose 1-form $\eta$ and fundamental 2-form $\omega$ are closed, and the {\it almost Kenmotsu manifold} is an almost contact manifold satisfying $d\eta=0$ and $d\omega=2\eta\wedge\omega$.
    Based on this Kim and Pak \cite{KP} introduced the concept of \emph{almost $\alpha$-cosymplectic manifold}, i.e., an almost contact manifold satisfying
$d\eta=0$ and $d\omega=2\alpha\eta\wedge\omega$ for any real number $\alpha$. In particular, if $\alpha$ is non-zero it is said to be an \emph{almost $\alpha$-Kenmotsu} manifold.  Later Aktan et al.\cite{AYM} defined an \emph{almost $f$-cosymplectic manifold} $M$ by generalizing the real number $\alpha$ to a smooth function $f$ on $M$, i.e., an almost contact manifold  satisfies $d\omega=2f\eta\wedge\omega$ and $d\eta=0$ for a smooth function $f$ satisfying $df\wedge\eta=0$.  Clearly, an almost $f$-cosymplectic manifold is an almost cosymplectic manifold under the condition that $f=0$ and an almost $\alpha$-Kenmotsu manifold if $f$ is constant$(\neq0)$.
In particular, if $f=1$ then $M$ is an almost Kenmotsu manifold.

On the other hand, we observe that a remarkable class of contact metric manifold is $(\kappa,\mu)$-space whose curvature tensor satisfies
\begin{equation}\label{1.2}
  R(X,Y)\xi=\kappa(\eta(Y)X-\eta(X)Y)+\mu(\eta(Y)hX-\eta(X)hY)
\end{equation}
for any vector fields $X,Y$, where $\kappa$ and $\mu$ are constants and $h:=\frac{1}{2}\mathcal{L}_\xi\phi$ is a self-dual operator.
 In fact Sasakian manifolds
are special $(\kappa,\mu)$-spaces with $\kappa=1$ and $h=0$. An \emph{almost cosymplectic $(\kappa,\mu)$-manifold} is
an almost cosymplectic manifold with curvature tensor  satisfying \eqref{1.2}.
 Endo proved that if $\kappa\neq0$ any almost cosymplectic $(\kappa,\mu)$-manifolds are not cosymplectic (\cite{E2}). Furthermore, since $\kappa\phi^2=h^2$, $\kappa\leq0$ and the equality holds if and only if the almost cosymplectic $(\kappa,\mu)$-manifolds are cosymplectic.
Notice that Wang proved the non-existence of gradient Ricci solitons in almost cosymplectic $(\kappa,\mu)$-manifolds(see \cite{W}).

In this paper we first obtain an non-existence of a Ricci soliton in almost cosymplectic $(\kappa,\mu)$-manifolds, namely

\begin{theorem}\label{T1}
There do not exist Ricci solitons on almost cosymplectic $(\kappa,\mu)$-manifolds  with $\kappa<0$ and $-\frac{1}{2}<\mu<0$.
\end{theorem}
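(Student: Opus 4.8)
The plan is to argue by contradiction. Suppose an almost cosymplectic $(\kappa,\mu)$-manifold $M^{2n+1}$ with $\kappa<0$ admits a Ricci soliton with potential vector field $V$ and constant $\lambda$; I will deduce $\kappa=0$, which is absurd. The first step is to assemble the structure equations of such manifolds (established by Endo \cite{E2} and related work): $\kappa$ is a negative constant; $h$ is symmetric with $h\xi=0$, $h\phi=-\phi h$, $\operatorname{tr}h=0$, and $h^{2}=\kappa\phi^{2}=-\kappa(\operatorname{Id}-\eta\otimes\xi)$, so $h$ is invertible on $\mathcal{D}=\ker\eta$ with eigenvalues $\pm\sqrt{-\kappa}$ (in particular $h\neq0$); $\nabla_{X}\xi=-\phi hX$ and $\nabla_{\xi}h=-\mu\,\phi h$; and the Ricci operator has the closed form $Q=a\operatorname{Id}+b\,\eta\otimes\xi+c\,h$ with $a,b,c$ constants built from $\kappa,\mu,n$, so that $Q\xi=2n\kappa\,\xi$ (equivalently $a+b=2n\kappa$) and the scalar curvature $r=(2n+1)a+b$ is constant.

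Next I would read off the first-order consequences of the soliton equation $\tfrac12\mathcal{L}_{V}g+Ric=\lambda g$. Its trace gives $\operatorname{div}V=(2n+1)\lambda-r$, a constant. Its restriction to $\xi$ gives $(\mathcal{L}_{V}g)(\xi,\cdot)=2(\lambda-2n\kappa)\eta$; combined with $d\eta=0$ (so that $\mathcal{L}_{V}\eta=d(\eta(V))$) and $\nabla_{X}\xi=-\phi hX$, this expresses $\nabla(\eta(V))$ through $V$, $h$ and the skew-symmetric part $B$ of $\nabla V$ (writing $\nabla_{X}V=\lambda X-QX+BX$), and in particular yields $\xi(\eta(V))=\lambda-2n\kappa$. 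I would also record the general Ricci-soliton identity $\Delta r-V(r)=2\lambda r-2\abs{Ric}^{2}$; since $r$ is constant it reduces to $\abs{Ric}^{2}=\lambda r$, and with the explicit values $r=2n(a+\kappa)$ and $\abs{Ric}^{2}=2na^{2}+4n^{2}\kappa^{2}-2n\kappa c^{2}$ this pins $\lambda$ down in terms of $\kappa,\mu,n$ (and, by Cauchy--Schwarz, would already force the metric to be Einstein if $\operatorname{div}V$ vanished, which is impossible for such a manifold).

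The core of the argument is to feed the soliton equation into the integrability identity
\[
g\big((\mathcal{L}_{V}\nabla)(X,Y),Z\big)=(\nabla_{Z}Ric)(X,Y)-(\nabla_{X}Ric)(Y,Z)-(\nabla_{Y}Ric)(X,Z),\qquad(\mathcal{L}_{V}\nabla)(X,Y)=\nabla_{X}\nabla_{Y}V-\nabla_{\nabla_{X}Y}V+R(V,X)Y .
\]
Because $Q$ is a constant-coefficient combination of $\operatorname{Id}$, $\eta\otimes\xi$ and $h$, the right-hand side is expressed through $\nabla h$ and $(\nabla_{X}\eta)(Y)=-g(\phi hX,Y)$, hence is explicit; on the left the term $R(V,X)\xi$ is given in closed form by the curvature condition \eqref{1.2}. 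I would then evaluate this identity on a frame adapted to the eigenspaces of $h$ — inserting $\xi$ in some slots and $h$-eigenvectors of $\mathcal{D}$ in others — and combine the resulting relations with the Ricci identity $\nabla^{2}_{X,Y}V-\nabla^{2}_{Y,X}V=R(X,Y)V$, the first and second Bianchi identities, and the constancy facts above, obtaining an over-determined system for $V$, $B$ and $a,b,c$. The expected outcome is that the $h$-contributions are forced to cancel, i.e.\ $h=0$ on $\mathcal{D}$; since $h^{2}=\kappa\phi^{2}$ this gives $\kappa\phi^{2}=0$, hence $\kappa=0$, the desired contradiction.

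The main obstacle is exactly that $V$ need not be a gradient: the skew part $B$ of $\nabla V$ is present throughout, so the rigid scalar identities available for gradient Ricci solitons (such as $r+\abs{\nabla\varphi}^{2}-2\lambda\varphi$ being constant for $V=\nabla\varphi$) cannot be invoked, and — since $M$ is not assumed complete or compact — one cannot integrate over $M$ to eliminate $B$. Everything must instead be extracted pointwise, by playing the integrability identity and the Bianchi identities against the rigid local structure of an almost cosymplectic $(\kappa,\mu)$-manifold and the invertibility of $h$ on $\mathcal{D}$; carrying out this bookkeeping — in particular keeping track of how the $h$-terms enter $\nabla Ric$, $R(V,\cdot)\,\cdot$ and $\nabla^{2}V$, and checking that they cannot be balanced when $\kappa<0$ — is the technical heart of the proof.
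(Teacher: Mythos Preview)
Your outline assembles all the right ingredients, and several of them already coincide with what the paper uses: the Ricci operator $Q=2n\kappa\,\eta\otimes\xi+\mu h$ (so your $a=0$, $b=2n\kappa$, $c=\mu$), the relation $h^{2}=\kappa\phi^{2}$, the identity $\nabla_{\xi}h=-\mu\phi h$, and---via $\abs{Ric}^{2}=\lambda r$ with $r=2n\kappa$---the pinned value $\lambda=2n\kappa-\mu^{2}$. You also correctly record $(\mathcal{L}_{V}g)(\xi,\xi)=2(\lambda-2n\kappa)$, which yields $g(\mathcal{L}_{V}\xi,\xi)=2n\kappa-\lambda=\mu^{2}$.

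The gap is that your proposal stops exactly where the argument must be made precise. You say you will evaluate the integrability identity on an adapted frame and ``expect'' an over-determined system forcing $h=0$, but you do not identify which evaluation actually closes the loop. The paper's decisive step---which your plan does not isolate---is to compute $(\mathcal{L}_{V}Ric)(\xi,\xi)$ in a \emph{second}, independent way: one inserts $Y=Z=\xi$ into the formula for $\mathcal{L}_{V}\nabla$, uses $(\mathcal{L}_{V}R)(X,Y)Z=(\nabla_{X}\mathcal{L}_{V}\nabla)(Y,Z)-(\nabla_{Y}\mathcal{L}_{V}\nabla)(X,Z)$ to obtain $(\mathcal{L}_{V}R)(X,\xi)\xi$, and then contracts over $X$. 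Because $\nabla_{\xi}h=-\mu\phi h$, this trace collapses to the scalar $(\mathcal{L}_{V}Ric)(\xi,\xi)=4n\kappa\mu^{2}$. On the other hand, Lie-differentiating the constant $Ric(\xi,\xi)=2n\kappa$ gives $(\mathcal{L}_{V}Ric)(\xi,\xi)=-4n\kappa\,g(\mathcal{L}_{V}\xi,\xi)$. Comparing forces $g(\mathcal{L}_{V}\xi,\xi)=-\mu^{2}$, and together with the value $\mu^{2}$ you already have, this yields $\mu=0$. Only \emph{after} $\mu=0$ is known does the system become genuinely over-determined: with $\mu=0$ the formula for $\nabla_{X}\mathcal{L}_{V}g$ reduces to the $\eta\otimes\eta$ part alone, and the Koszul-type identity for $\mathcal{L}_{V}\nabla$ then forces $g(AX,Y)\eta(Z)+g(AX,Z)\eta(Y)=0$, hence $A=-\phi h=0$ and $h=0$, contradicting $\kappa<0$.

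In short: your general plan (feed the soliton equation into $\mathcal{L}_{V}\nabla$ and evaluate with $\xi$'s) is the same route the paper takes, but the specific target---the double computation of $(\mathcal{L}_{V}Ric)(\xi,\xi)$ giving $\mu^{2}=-\mu^{2}$---is the step that makes it work, and that step is missing from your proposal. Without first killing $\mu$, the ``adapted-frame bookkeeping'' you allude to does not obviously terminate, because the $h$-terms in $\nabla Ric$ and in $R(V,\cdot)\cdot$ carry the free parameter $\mu$ and can balance one another.
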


 Next we consider a three-dimensional almost $f$-cosymplectic manifold admitting a Ricci soliton whose potential vector field is the Reeb vector field $\xi$, and prove the following theorem.
\begin{theorem}\label{T2}
A three-dimensional almost $f$-cosymplectic manifold $M$ admits a Ricci soliton whose potential vector field is $\xi$
if and only if $\xi$ is Killing and $M$ is Ricci flat.
\end{theorem}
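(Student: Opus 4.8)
The plan is to reduce the soliton equation to a few scalar identities along the Reeb flow and then to a polynomial identity in $f$. Throughout write $\psi X:=\nabla_X\xi$; on an almost $f$-cosymplectic manifold one has $\psi X=-f\phi^2X-\phi hX$, $\psi$ is self-adjoint with $\psi\xi=0$, and (because $d\eta=0$) $\nabla_\xi\phi=0$ and $\nabla f=(\xi f)\xi$, the latter coming from $df\wedge\eta=0$. First I would rewrite \eqref{1.1}: since $\tfrac12\mathcal{L}_\xi g=g(\psi\,\cdot\,,\cdot)$, the soliton equation becomes $Q=\lambda\,\mathrm{Id}+f\phi^2+\phi h$, where $Q$ is the Ricci operator; taking the trace gives the scalar curvature $r=3\lambda-2f$, and applying $Q$ to $\xi$ gives $Q\xi=\lambda\xi$, i.e. $Ric(\xi,\xi)=\lambda$.

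Next I would extract two scalar identities. From $R(X,\xi)\xi=-(\nabla_\xi\psi)X-\psi^2X$, together with $\mathrm{tr}(\nabla_\xi\psi)=\xi(\mathrm{tr}\,\psi)=2\xi f$ and $\mathrm{tr}(\psi^2)=\|h\|^2+2f^2$, a trace yields $Ric(\xi,\xi)=-\|h\|^2-2f^2-2\xi f$, so $\lambda=-\|h\|^2-2f^2-2\xi f$. Then I would take the divergence of \eqref{1.1}: the contracted second Bianchi identity $\mathrm{div}\,Ric=\tfrac12\,dr$ and the Weitzenb\"ock formula applied to the closed $1$-form $\eta$ (using $d^*\eta=-2f$ and $Ric(\xi,\cdot)=\lambda\eta$) force $\xi f$ to be a fixed nonzero multiple of $\lambda$. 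Combining the two relations shows that $\|h\|^2+2f^2$ is constant.

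Now I would use dimension three. Substituting $Q=\lambda\,\mathrm{Id}+f\phi^2+\phi h$ into the three-dimensional curvature identity
\[
R(X,Y)Z=g(Y,Z)QX-g(X,Z)QY+Ric(Y,Z)X-Ric(X,Z)Y-\tfrac r2\big(g(Y,Z)X-g(X,Z)Y\big)
\]
gives $R(X,Y)\xi=\tfrac\lambda2(\eta(Y)X-\eta(X)Y)+\eta(Y)\phi hX-\eta(X)\phi hY$. Putting $Y=\xi$ and matching with $R(X,\xi)\xi=-(\nabla_\xi\psi)X-\psi^2X$ — computing $\nabla_\xi\psi$ from $\nabla_\xi\phi=0$ and using the three-dimensional algebraic identity $h^2=\tfrac12\|h\|^2(\mathrm{Id}-\eta\otimes\xi)$ together with the relations just obtained — everything collapses to $(\nabla_\xi h)X=(1-2f)hX$; hence $\xi(\|h\|^2)=2(1-2f)\|h\|^2$.

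Finally, differentiating the constant $\|h\|^2+2f^2$ along $\xi$ and substituting the last two facts produces $p(f)=0$ for a cubic polynomial $p$ with constant coefficients and leading coefficient $4$; thus $f$ takes only finitely many values and, $M$ being connected, is constant. Then $\xi f=0$, and since $\xi f$ is a nonzero multiple of $\lambda$ we get $\lambda=0$; so $\|h\|^2+2f^2=0$, i.e. $h\equiv0$ and $f\equiv0$, hence $\psi=0$, that is $\nabla\xi=0$, so $\xi$ is Killing, and $Q=\lambda\,\mathrm{Id}+f\phi^2+\phi h=0$, so $M$ is Ricci flat. The converse is immediate: if $\xi$ is Killing then $\mathcal{L}_\xi g=0$, and if $M$ is Ricci flat then $\tfrac12\mathcal{L}_\xi g+Ric-0\cdot g=0$, so $(\xi,0)$ is a steady Ricci soliton. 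The main obstacle is the step $(\nabla_\xi h)X=(1-2f)hX$: it is exactly where the curvature constraint forced by the soliton must be balanced against the intrinsic structure equations of an almost $f$-cosymplectic manifold, and it genuinely uses dimension three (through the identity for $h^2$) as well as the auxiliary fact $\nabla_\xi\phi=0$; a secondary point requiring care is the sign bookkeeping in the divergence/Weitzenb\"ock computation that pins down $\xi f$.
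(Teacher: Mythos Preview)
Your proposal is correct and reaches the same conclusion as the paper, but by a genuinely different and more streamlined route.

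Both you and the paper begin identically: rewrite the soliton equation as $Q=\lambda\,\mathrm{Id}+f\phi^{2}+\phi h$, deduce $Q\xi=\lambda\xi$, $r=3\lambda-2f$, and $\lambda=-\|h\|^{2}-2f^{2}-2\xi f$. From there the two arguments diverge.

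\medskip
\textbf{The paper's route.} The paper proves a separate lemma computing $(\mathcal{L}_{\xi}R)(X,\xi)\xi$ on any almost $f$-cosymplectic manifold, Lie-differentiates the three-dimensional expression for $R(X,\xi)\xi$, and matches the two. After a lengthy calculation involving $\nabla_{\xi}\nabla_{\xi}h$ this yields the commutation relation $h(\nabla_{\xi}h)=(\nabla_{\xi}h)h$, hence $h^{3}=\beta h$ with $\beta=-\lambda-2f^{2}-2\xi f$; an eigenvalue-counting argument then forces $h=0$ and $\beta=0$. Separately, differentiating $Q\xi=\lambda\xi$ and contracting gives $\xi f+\lambda=0$; combining, $f$ is constant and then $f=\lambda=0$.

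\medskip
\textbf{Your route.} You bypass the Lie-derivative lemma entirely. By comparing the \emph{curvature itself}, $R(X,\xi)\xi=-\tfrac{\lambda}{2}\phi^{2}X+\phi hX$ (from the $3$-dimensional formula) with $R(X,\xi)\xi=-(\nabla_{\xi}\psi)X-\psi^{2}X$, and invoking the algebraic identity $h^{2}=\tfrac12\|h\|^{2}(\mathrm{Id}-\eta\otimes\xi)$ peculiar to dimension three, the $\phi$-component cancels exactly (thanks to $\lambda=-\|h\|^{2}-2f^{2}-2\xi f$) and you obtain the \emph{stronger} relation $(\nabla_{\xi}h)=(1-2f)h$. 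This immediately gives $\xi(\|h\|^{2})=2(1-2f)\|h\|^{2}$. Your derivation of $\xi f=-\lambda$ via divergence of the soliton equation together with the Weitzenb\"ock identity for the closed form $\eta$ (using $d^{*}\eta=-2f$) is equivalent to, but conceptually different from, the paper's contraction of $\nabla(Q\xi)=\lambda\nabla\xi$. Differentiating the constant $\|h\|^{2}+2f^{2}=\lambda$ along $\xi$ then produces the cubic $4f^{3}-2f^{2}-4\lambda f+\lambda=0$, whence $f$ is constant, $\xi f=0$, $\lambda=0$, and finally $h=0$, $f=0$.

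\medskip
\textbf{What each approach buys.} Your argument is shorter: it avoids second-order Lie derivatives of curvature and the auxiliary lemma, and it exploits the dimension-three identity for $h^{2}$ at exactly the right moment to collapse the structure equation to $(\nabla_{\xi}h)=(1-2f)h$. The paper's approach is more general in spirit (the lemma on $(\mathcal{L}_{\xi}R)(X,\xi)\xi$ holds in any dimension) but then needs an eigenvalue argument that is really a disguised use of the same $3$-dimensional fact. Your flagged ``main obstacle'' is indeed the crux, and your outline handles it correctly; the Weitzenb\"ock step does require the sign care you mention, but it checks out and yields precisely $\xi f=-\lambda$.
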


As it is well known that a $(2n+1)$-dimensional almost contact manifold $(M,\phi,\xi,\eta,g)$
is said to be \emph{$\eta$-Einstein} if its Ricci tensor satisfies
\begin{equation}\label{1.3}
Ric=ag+b\eta\otimes\eta,
\end{equation}
where $a$ and $b$ are smooth functions.
For a three-dimensional $\eta$-Einstein almost $f$-cosymplectic manifold $M$ with a Ricci soliton
we prove the following result:
\begin{theorem}\label{T3}
Let $(M,\phi,\eta,\xi,g)$ be a three-dimensional $\eta$-Einstein almost $f$-cosymplectic manifold admitting a Ricci soliton. Then
either $M$ is an $\alpha$-cosymplectic manifold, or
   $M$ is an Einstein manifold of constant sectional curvature $\frac{\lambda}{2} $ with $\lambda=-2\xi(f)-2f^2$.
\end{theorem}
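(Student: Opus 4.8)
The plan is to use simultaneously three sources of rigidity — the structure equations of a three-dimensional almost $f$-cosymplectic manifold, the $\eta$-Einstein hypothesis, and the soliton equation — and then to split into the cases $h\equiv 0$ and $h\not\equiv 0$, where $h=\tfrac12\mathcal L_\xi\phi$. The guiding picture is that the $\eta$-Einstein condition together with the three-dimensional curvature structure already forces a strong dichotomy at the level of $h$ and $f$, while the soliton equation is what pins down the Einstein alternative.

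First I would collect the identities valid on any almost $f$-cosymplectic manifold: $\nabla_X\xi=-f\phi^2X-\phi hX$, $\nabla_\xi\xi=0$, $\operatorname{div}\xi=2nf$ (so $=2f$ here), and, since $df\wedge\eta=0$, that $\operatorname{grad}f=\xi(f)\xi$; hence $Ric(\xi,\xi)=-2f^2-\|h\|^2-2\xi(f)$. In dimension three the Ricci operator has the form $QX=\sigma_1X+\sigma_2\,\eta(X)\xi+T(X)$, where $\sigma_1,\sigma_2$ are explicit functions of the scalar curvature $r$, of $f$, $\xi(f)$ and $\|h\|^2$, and $T$ is a symmetric operator built from $h$, $\phi h$ and $\nabla_\xi h$ which annihilates $\xi$ and has vanishing trace on $\ker\eta$. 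Imposing $Ric=ag+b\,\eta\otimes\eta$ forces $T\equiv 0$; since the eigenvalues of $h$ on $\ker\eta$ are $\pm\mu$ for a function $\mu\ge 0$, unravelling $T=0$ together with the evolution identity for $\nabla_\xi h$ should yield the pointwise relation $\mu f=0$ along with $a=\sigma_1$ and $a+b=Ric(\xi,\xi)=-2f^2-2\xi(f)$ (note $\mu f=0$ removes the $\|h\|^2$-term wherever $f\neq 0$). Contracting the second Bianchi identity with $Q=aI+b\,\eta\otimes\xi$ and using $\operatorname{div}\xi=2f$, $\nabla_\xi\xi=0$ gives in addition $\operatorname{grad}(a+b)=\big(2\xi(b)+4bf\big)\xi$, so $a+b$ is constant along $\ker\eta$.

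Now write the soliton equation as $\tfrac12\mathcal L_Vg=(\lambda-a)g-b\,\eta\otimes\eta$. On the open set where $h\neq 0$ the relation $\mu f=0$ gives $f\equiv 0$ there, so this piece of $M$ is almost cosymplectic; using that $\operatorname{grad}f$ is collinear with $\xi$ (hence $f$ is locally constant on $\{f=0\}$) and connectedness of $M$, I would conclude that $f$ is constant on $M$, i.e. $M$ is an $\alpha$-cosymplectic manifold — the first alternative. On the complementary set $h\equiv 0$ one has $\nabla_X\xi=f\big(X-\eta(X)\xi\big)$, and a direct computation of $R(X,Y)\xi$ gives $R(X,Y)\xi=\big(f^2+\xi(f)\big)\big(\eta(X)Y-\eta(Y)X\big)$; comparing with the three-dimensional $\eta$-Einstein curvature identity $R(X,Y)\xi=\tfrac{a+b}{2}\big(\eta(Y)X-\eta(X)Y\big)$ recovers $a+b=-2f^2-2\xi(f)$, and then feeding the soliton equation back in — evaluating $\tfrac12\mathcal L_Vg$ on an adapted frame $\{e_1,\phi e_1,\xi\}$, using $\mathcal L_V\eta=d(\eta(V))$ (from $d\eta=0$) and the commutation formula expressing $\mathcal L_V\nabla$, hence $\mathcal L_VRic$, through $\mathcal L_Vg$ — should force $b\equiv 0$ and $\lambda=a$ constant. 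Then $Ric=\lambda g$, so in dimension three $M$ has constant sectional curvature $\tfrac r6=\tfrac a2=\tfrac\lambda2$, and $Ric(\xi,\xi)=\lambda$ with $\|h\|^2=0$ gives $\lambda=-2\xi(f)-2f^2$, the second alternative.

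The step I expect to be the main obstacle is the bookkeeping in the $h\equiv 0$ branch: one must extract enough independent scalar relations from the soliton equation — beyond its $(\xi,\xi)$- and mixed $(\mathcal D,\xi)$-components — to conclude both $b\equiv 0$ and that the potential field is in fact Killing, from which $a=\lambda$ is constant. A secondary but genuine difficulty is, in the $h\not\equiv 0$ branch, upgrading "$f$ vanishes, hence is locally constant, on an open subset" to "$f$ is constant on the connected manifold $M$", which is precisely where the hypothesis $df\wedge\eta=0$ and the structure of the level set $\{f=0\}$ must be used.
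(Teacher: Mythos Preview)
Your proposal has a genuine structural gap: the case split is reversed relative to what the argument actually supports, and the linchpin relation $\mu f=0$ that you assert ``should'' follow from the $\eta$-Einstein condition alone is neither proved nor, as far as I can see, true without the soliton equation.

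In the paper the soliton equation is used \emph{before} the case split, not after. One computes $(\mathcal{L}_V\nabla)(Y,Z)$ from $\mathrm{Ric}=ag+b\,\eta\otimes\eta$, then $(\mathcal{L}_VR)$ via $(\mathcal{L}_VR)(X,Y)Z=(\nabla_X\mathcal{L}_V\nabla)(Y,Z)-(\nabla_Y\mathcal{L}_V\nabla)(X,Z)$, and contracts to obtain $(\mathcal{L}_V\mathrm{Ric})$. Comparing this with the expression obtained by Lie-differentiating $\mathrm{Ric}=ag+b\,\eta\otimes\eta$ directly, and evaluating on $\phi Y,\phi Z$, yields the scalar identity
\[
b\,g\bigl(2fhY+\phi h^2Y,\,Y\bigr)=0.
\]
Taking $Y$ to be a unit eigenvector of $h$ with nonzero eigenvalue gives $bf=0$ --- not $\mu f=0$. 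The cases then run \emph{opposite} to yours: if $h\neq 0$ one shows $b\not\equiv 0$ would force $f=0$ and hence (via the relation $a+b=-2\xi(f)-2f^2$ and $\mathrm{trace}(h^2)=-(a+b)-2(\xi(f)+f^2)$) $h=0$ on an open set, a contradiction; thus $b\equiv 0$, $a$ is constant, and one lands in the Einstein alternative. If $h=0$, the relation $a+b=-2\xi(f)-2f^2$ together with \eqref{2.8} forces $\xi(f)=0$, hence $f$ is constant and $M$ is $\alpha$-cosymplectic.

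By contrast, your $h\neq 0$ branch tries to reach the $\alpha$-cosymplectic conclusion (with $\alpha=0$) from $\mu f=0$, which you have not derived; and your $h=0$ branch tries to reach the Einstein conclusion by forcing $b=0$, which you yourself flag as the hard step and for which your outlined tools (evaluating $\mathcal L_Vg$ on a frame, using $\mathcal L_V\eta=d(\eta(V))$) are not enough --- indeed, $h=0$ alone does not make the manifold Einstein, it makes $f$ constant. The ``secondary difficulty'' you mention (globalizing $f=0$ from an open set) is also not resolved by $df\wedge\eta=0$: that only gives $\nabla f\parallel\xi$, not that the zero set of $f$ is open-closed. The fix is to let the soliton equation do the heavy lifting up front, obtain $bf=0$ rather than $\mu f=0$, and then split on $h$ as above.
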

\begin{remark}
Our theorem extends the Wang and Liu's result \cite{WL}. In fact, when $f=1$ then $ a=-2$ in view of Proposition \ref{p2} in Section 5.
Thus it follows from \eqref{2.8} that $\mathrm{trace}(h^2)=0$, i.e., $h=0$. So $M$ is also a Kenmotsu manifold of sectional curvature $-1$.
\end{remark}
In order to prove these conclusions, in Section 2 we recall some basic concepts and formulas. The proofs of theorems will be given in Section 3,
Section 4 and Section 5, respectively.

\section{Some basic concepts and related results}

In this section we will recall some basic concepts and equations. Let $M^{2n+1}$ be a $(2n+1)$-dimensional Riemannian manifold.
An \emph{almost contact structure} on $M$ is a triple $(\phi,\xi,\eta)$, where $\phi$ is a
$(1,1)$-tensor field, $\xi$ a unit vector field, $\eta$ a one-form dual to $\xi$ satisfying
\begin{equation}\label{2.1}
\phi^2=-I+\eta\otimes\xi,\,\eta\circ \phi=0,\,\phi\circ\xi=0.
\end{equation}

A smooth manifold with such a structure is called an \emph{almost contact manifold}.
It is well-known that there exists a Riemannian metric $g$ such that
\begin{equation}\label{2.2}
g(\phi X,\phi Y)=g(X,Y)-\eta(X)\eta(Y),
\end{equation} for any $X,Y\in\mathfrak{X}(M)$. It is easy to get from \eqref{2.1} and \eqref{2.2} that
\begin{equation}\label{2.3}
g(\phi X,Y)=-g(X,\phi Y),\quad g(X,\xi)=\eta(X).
\end{equation}
An almost contact structure $(\phi,\xi,\eta)$ is said
to be \emph{normal} if the corresponding complex structure $J$ on $M\times\mathbb{R}$ is integrable.

Denote by $\omega$ the fundamental 2-form on $M$ defined by $\omega(X,Y):=g(\phi X,Y)$ for all $X,Y\in\mathfrak{X}(M)$.
An {\it almost $\alpha$-cosymplectic manifold} (\cite{KP,OAM}) is an almost contact metric manifold $(M,\phi,\xi,\eta,g)$ such that the fundamental form $\omega$ and 1-form $\eta$ satisfy $d\eta=0$ and $d\omega=2\alpha\eta\wedge\omega,$ where $\alpha$ is a real number. In particular, $M$ is an {\it almost cosymplectic manifold} if $\alpha=0$
 and an {\it almost Kenmotsu manifold} if $\alpha=1$.
In \cite{AYM}, a class of more general almost contact manifolds was defined by generalizing the real number $\alpha$ to a smooth function $f$.  More precisely, an almost contact metric manifold is called an \emph{almost $f$-cosymplectic manifold} if $d\eta=0$ and $d\omega=2f\eta\wedge\omega$ are satisfied,
where $f$ is a smooth function with $df\wedge\eta=0.$ In addition, a normal almost $f$-cosymplectic manifold is
 said to be an \emph{$f$-cosymplectic manifold.}

Let $M$ be an almost $f$-cosymplectic manifold, we recall that there is an operator
$h=\frac{1}{2}\mathcal{L}_\xi\phi$ which is a self-dual operator.  The Levi-Civita connection
is given by (see \cite{AYM})
\begin{equation}\label{2.4}
  2g((\nabla_X\phi)Y,Z)=2fg(g(\phi X,Y)\xi-\eta(Y)\phi X,Z)+g(N(Y,Z),\phi X)
\end{equation}
for arbitrary vector fields $X,Y$, where $N$ is the Nijenhuis torsion of $M$.  Then by a simple calculation, we have
\begin{equation*}
\mathrm{trace}(h)=0,\quad h\xi=0,\quad\phi h=-h\phi,\quad g(hX,Y)=g(X,hY),\quad\forall X,Y\in\mathfrak{X}(M).
\end{equation*}

Write $AX:=\nabla_X\xi$ for any vector field $X$. Thus $A$ is a $(1,1)$-tensor of $M$. Using \eqref{2.4}, a straightforward calculation gives
\begin{equation}\label{2.5}
AX= -f\phi^2X-\phi hX
\end{equation}
and $\nabla_\xi\phi=0$. By \eqref{2.3}, it is obvious that $A\xi=0$ and $A$ is symmetric with respect to metric $g$, i.e., $g(AX,Y)=g(X,AY)$ for all $X,Y\in\mathfrak{X}(M)$. We denote by $R$ and $\mathrm{Ric}$ the Riemannian curvature tensor and Ricci tensor, respectively. For an almost $f$-cosymplectic manifold $(M^{2n+1},\phi,\xi,\eta,g)$ the following equations were proved(\cite{AYM}):
\begin{align}
&R(X,\xi)\xi-\phi R(\phi X,\xi)\xi=2[(\xi(f)+f^2)\phi^2X-h^2X]\label{2.6},\\
&(\nabla_\xi h)X= -\phi R(X,\xi)\xi-[\xi(f)+f^2]\phi X-2fhX-\phi h^2X,\label{2.7} \\
&Ric(\xi,\xi)=-2n(\xi(f)+f^2)-\mathrm{trace}(h^2),\label{2.8}\\
&\mathrm{trace}(\phi h)=0,\label{2.9}\\
 &R(X,\xi)\xi=[\xi(f)+f^2]\phi^2X+2f\phi hX-h^2X+\phi(\nabla_\xi h)X\label{2.10},
 \end{align}
for any vector fields $X,Y$ on $M$.

\section{Proof of Theorem \ref{T1} }
In this section we suppose that $(M,\phi,\xi,\eta,g)$ is an almost cosymplectic $(\kappa,\mu)$-manifold, i.e., the
curvature tensor satisfies \eqref{1.2}.
In the following we always suppose
$\kappa<0$. The following relations are provided(see \cite[Eq.(3.22) and Eq.(3.23)]{MNY}):
\begin{align}
  Q=&2n\kappa\eta\otimes\xi+\mu h,\label{3.1}\\
  h^2=&\kappa\phi^2\label{3.2},
\end{align}
where $Q$ is the Ricci operator defined by $\mathrm{Ric}(X,Y)=g(QX,Y)$ for any vectors $X,Y$. In particular,  $Q\xi=2n\kappa\xi$ because of $h\xi=0$.

In view of \eqref{3.1} and the Ricci soliton equation \eqref{1.1}, we obtain
\begin{equation}\label{3.3}
  (\mathcal{L}_V g)(Y,Z)=2\lambda g(Y,Z)-2\mu g(hY,Z)-4n\kappa\eta(Y)\eta(Z)
\end{equation}
for any vectors $Y,Z$. Since $\kappa,\mu$ are two real numbers and $\nabla_X\xi=AX$, differentiating \eqref{3.3} along any vector field $X$ provides
\begin{align}\label{3.4}
   (\nabla_X\mathcal{L}_V g)(Y,Z)=&\nabla_X((\mathcal{L}_V g)(Y,Z))-\mathcal{L}_V g(\nabla_XY,Z)-\mathcal{L}_V g(Y,\nabla_XZ)\\
   =&-2\mu g((\nabla_Xh)Y,Z)-4n\kappa\nabla_X(\eta(Y))\eta(Z)-4n\kappa\eta(Y)\nabla_X(\eta(Z))\nonumber\\
   &+4n\kappa\eta(\nabla_XY)\eta(Z)+4n\kappa\eta(Y)\eta(\nabla_XZ)\nonumber\\
   =&-2\mu g((\nabla_Xh)Y,Z)-4n\kappa g(Y,AX)\eta(Z)-4n\kappa\eta(Y)g(Z,AX).\nonumber
\end{align}

Moreover, making use of the commutation formula (see \cite{Y}):
\begin{equation*}
\begin{aligned}
    (&\mathcal{L}_V\nabla_X g-\nabla_X\mathcal{L}_V g-\nabla_{[V,X]}g)(Y,Z)= \\
   & -g((\mathcal{L}_V\nabla)(X,Y),Z)-g((\mathcal{L}_V\nabla)(X,Z),Y),
\end{aligned}
 \end{equation*}
we derive
\begin{align}\label{3.5}
   (\nabla_X\mathcal{L}_V g)(Y,Z)=g((\mathcal{L}_V\nabla)(X,Y),Z)+g((\mathcal{L}_V\nabla)(X,Z),Y).
\end{align}
It follows from  \eqref{3.4} and \eqref{3.5} that
\begin{align}\label{3.6}
g((\mathcal{L}_V\nabla)(Y,Z),X)=&\frac{1}{2}\Big\{(\nabla_Z\mathcal{L}_V g)(Y,X)+(\nabla_Y\mathcal{L}_V g)(Z,X)-(\nabla_X\mathcal{L}_V g)(Y,Z)\Big\} \\
    =&-\mu \Big\{g((\nabla_Z h)Y,X)+g((\nabla_Y h)Z,X)-g((\nabla_X h)Y,Z)\nonumber\\
    &-4n\kappa g(Y,AZ)\eta(X)\Big\}.\nonumber
\end{align}
Hence for any vector $Y$,
\begin{equation}\label{3.7}
  (\mathcal{L}_V\nabla)(Y,\xi)=-\mu \Big\{(\nabla_\xi h)Y+2\kappa\phi Y\Big\}
\end{equation}
by using \eqref{3.2} and \eqref{2.5}.
Lie differentiating \eqref{3.7} along $V$ and making use of the identity(\cite{Y}):
\begin{equation}\label{3.8}
(\mathcal{L}_VR)(X,Y)Z =(\nabla_X\mathcal{L}_V\nabla)(Y,Z)-(\nabla_Y\mathcal{L}_V\nabla)(X,Z),
\end{equation}
 we obtain
\begin{align}\label{3.9}
(\mathcal{L}_VR)(X,\xi)\xi=&(\nabla_X\mathcal{L}_V\nabla)(\xi,\xi)-(\nabla_\xi\mathcal{L}_V\nabla)(X,\xi)\\
=&-2(\mathcal{L}_V\nabla)(\nabla_X\xi,\xi)-(\nabla_\xi\mathcal{L}_V\nabla)(X,\xi)\nonumber\\
=&2\mu \Big\{(\nabla_\xi h)\nabla_X\xi+2\kappa\phi\nabla_X\xi\Big\}+\mu\{(\nabla_\xi\nabla_\xi h)X)\}\nonumber\\
=&2\mu \Big\{\phi(\nabla_\xi h)hX+2\kappa hX\Big\}+\mu\{(\nabla_\xi\nabla_\xi h)X)\}\nonumber\\
=&\mu \Big\{2\kappa\mu\phi^2X+(4\kappa+\mu) hX\Big\}.\nonumber
 \end{align}
Here we have used the following fact: By \eqref{2.7}, \eqref{3.2} and \eqref{2.1}, we get $\nabla_\xi h=-\mu\phi h,$ thus
\begin{align*}
  \nabla_\xi\nabla_\xi h+2\phi(\nabla_\xi h)h=\mu h+2\kappa\mu\phi^2.
\end{align*}

Lie differentiating $R(X,\xi)\xi=-\kappa\phi^2X+\mu hX$ (a consequence of Eq.\eqref{1.2}) and recalling \eqref{3.9}, we have
 \begin{align}\label{3.10}
   \mu& \Big\{2\kappa\mu\phi^2X+(4\kappa+\mu) hX\Big\}+R(X,\mathcal{L}_V\xi)\xi+R(X,\xi)\mathcal{L}_V\xi \\
   = & -\kappa(\mathcal{L}_V\eta)(X)\xi-\kappa\eta(X)\mathcal{L}_V\xi+\mu (\mathcal{L}_Vh)X.\nonumber
 \end{align}
 Using \eqref{1.2} again gives
 \begin{align}\label{3.11}
   R(X,\xi)\mathcal{L}_V\xi & =R(X,\mathcal{L}_V\xi)\xi-\Big[\kappa \Big(g(X,\mathcal{L}_V\xi)\xi-\eta(X)\mathcal{L}_V\xi\Big) \\
    & +\mu\Big(g(hX,\mathcal{L}_V\xi)\xi-\eta(X)h\mathcal{L}_V\xi\Big)\Big].\nonumber
 \end{align}
 On the other hand, it follows from \eqref{3.3} that
 \begin{equation}\label{3.12}
 (\mathcal{L}_V\eta)(X)=(\mathcal{L}_Vg)(X,\xi)+g(X,\mathcal{L}_V\xi)=(2\lambda-4n\kappa)\eta(X)+g(X,\mathcal{L}_V\xi).
 \end{equation}
Now substituting \eqref{3.11} and \eqref{3.12} into \eqref{3.10} and using \eqref{1.2}, we obtain
 \begin{align}\label{3.13}
   \mu& \Big\{2\kappa\mu\phi^2X+(4\kappa+\mu) hX\Big\}+2\kappa\eta(\mathcal{L}_V\xi)X+2\mu\eta(\mathcal{L}_V\xi)hX \\
   &-\kappa g(X,\mathcal{L}_V\xi)\xi-\kappa\eta(X)\mathcal{L}_V\xi-\mu g(hX,\mathcal{L}_V\xi)\xi-\mu\eta(X)h\mathcal{L}_V\xi\nonumber\\
   = & -\kappa\Big[(2\lambda-4n\kappa)\eta(X)+g(X,\mathcal{L}_V\xi)\Big]\xi-\kappa\eta(X)\mathcal{L}_V\xi+\mu (\mathcal{L}_Vh)X.\nonumber
 \end{align}

Furthermore,  notice that the Ricci tensor equation \eqref{3.1} implies the scalar curvature
$r=2n\kappa$ and recall the following integrability formula (see  \cite[Eq.(5)]{S}):
\begin{equation*}
  \mathcal{L}_Vr=-\Delta r-2\lambda r+2||Q||^2
\end{equation*}
 for a Ricci soliton. By \eqref{3.1}, \eqref{3.2} and the foregoing formula we thus obtain
 $\lambda=2n\kappa-\mu^2$.  Also, it follows from \eqref{3.3} that $g(\mathcal{L}_V\xi,\xi)=2n\kappa-\lambda$. Therefore $g(\mathcal{L}_V\xi,\xi)=\mu^2$.  Hence 
Equation \eqref{3.13} becomes
  \begin{align}\label{3.14}
   \mu& (4\kappa+\mu) hX+2\mu^3hX -\kappa\eta(X)\mathcal{L}_V\xi-\mu g(hX,\mathcal{L}_V\xi)\xi-\mu\eta(X)h\mathcal{L}_V\xi\nonumber\\
   = & -\kappa\eta(X)\mathcal{L}_V\xi+\mu (\mathcal{L}_Vh)X.\nonumber
 \end{align}
Letting $X\in\mathfrak{D}$ be an eigenvector of $h$ in the above formula and taking the inner product with $X$, we find
\begin{equation*}
   \mu (4\kappa+\mu+2\mu^2)=0
\end{equation*}
If $\mu\neq0$ then $4\kappa+\mu+2\mu^2=0$, which implies $\mu<-\frac{1}{2}$ or $\mu>0.$

If $\mu=0$. From \eqref{3.4}, \eqref{3.5} and \eqref{3.6}, we have
 \begin{equation*}
   g(Y,AX)\eta(Z)+g(Z,AX)\eta(Y)=0
\end{equation*}
since $\kappa<0$. Now putting $Z=\xi$ gives $g(Y,AX)=0$ for any vector fields $X,Y$ because $A\xi=0$. That means that $AX=0$ for any vector field $X$. From \eqref{2.5} with $f=0$, we get $h=0$. Clearly, it is impossible. Therefore we complete the proof.

\section{Proof of Theorem \ref{T2}}
In this section we assume that $M$ is a three dimensional almost $f$-cosymplectic
manifold and the potential vector field $V$ is the Reeb vector field. Before giving the proof, we need to prove the following lemma.
 \begin{lemma}\label{L1}
For any almost $f$-cosymplectic manifold the following formula holds:
\begin{align*}
(\mathcal{L}_\xi R)(X,\xi)\xi=&2\xi(f)\phi hX-2[f(\nabla_\xi h)\phi X+(\nabla_\xi h)hX]\nonumber\\
&+[\xi(\xi(f))+2f\xi(f)]\phi^2X+\phi(\nabla_\xi\nabla_\xi h)X.\nonumber
 \end{align*}
\end{lemma}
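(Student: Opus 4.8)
The plan is to Lie-differentiate along $\xi$ the identity \eqref{2.10} expressing $R(X,\xi)\xi$, using that $\xi$ is the generator of its own flow, so that $\mathcal{L}_\xi\xi=0$. Since the map $X\mapsto R(X,\xi)\xi$ defines a $(1,1)$-tensor $F$, one has $(\mathcal{L}_\xi R)(X,\xi)\xi=(\mathcal{L}_\xi F)X$; concretely, when \eqref{2.10} is Lie-differentiated the terms carrying $\mathcal{L}_\xi\xi$ vanish and the terms carrying $\mathcal{L}_\xi X$ cancel against $R(\mathcal{L}_\xi X,\xi)\xi$. Thus everything reduces to applying the Leibniz rule to
\begin{equation*}
F=[\xi(f)+f^2]\phi^2+2f\,\phi h-h^2+\phi\circ(\nabla_\xi h).
\end{equation*}

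First I would record the elementary Lie derivatives on an almost $f$-cosymplectic manifold: $\mathcal{L}_\xi\phi=2h$ by definition of $h$, $\mathcal{L}_\xi\eta=0$ since $d\eta=0$, $\mathcal{L}_\xi(\phi^2)=(\mathcal{L}_\xi\phi)\phi+\phi(\mathcal{L}_\xi\phi)=2(h\phi+\phi h)=0$, and $\mathcal{L}_\xi u=\xi(u)$ for a function $u$. Then, from $\nabla_X\xi=AX=-f\phi^2X-\phi hX$, from $\nabla_\xi\phi=0$, and from the conversion rule $(\mathcal{L}_\xi T)X=(\nabla_\xi T)X-A(TX)+T(AX)$ valid for any $(1,1)$-tensor $T$, I would derive the two intermediate identities
\begin{equation*}
\mathcal{L}_\xi h=\nabla_\xi h+2\phi h^2,\qquad
\mathcal{L}_\xi(\nabla_\xi h)=\nabla_\xi\nabla_\xi h+\phi\bigl[h(\nabla_\xi h)+(\nabla_\xi h)h\bigr].
\end{equation*}
The second one requires the anticommutation relation $\phi(\nabla_\xi h)=-(\nabla_\xi h)\phi$, which I would obtain by differentiating $\phi h+h\phi=0$ along $\xi$ and invoking $\nabla_\xi\phi=0$; together with $h\phi=-\phi h$ this forces the $f$-terms inside $A(\nabla_\xi h)-(\nabla_\xi h)A$ to cancel and produces the stated expression.

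Finally I would substitute these identities into the Leibniz expansion of $\mathcal{L}_\xi F$ and simplify, using $\phi^2=-I+\eta\otimes\xi$, $h\xi=0$, the symmetry of $h$ (hence of $\nabla_\xi h$), and $(\nabla_\xi h)\xi=0$ (from differentiating $h\xi=0$ along $\xi$, since $\nabla_\xi\xi=A\xi=0$), so that all $\eta(\cdot)\xi$-terms drop out and $\phi^2$ acts as $-I$ on the vectors that occur. Collecting terms, the $h^2$-contributions cancel (the $4f\,h^2X$ coming from $(\mathcal{L}_\xi\phi)h$ inside $\mathcal{L}_\xi(\phi h)$ against the $-4f\,h^2X$ coming from $\phi^2 h^2=-h^2$), the $h(\nabla_\xi h)X$-contributions cancel, and $2f\,\phi(\nabla_\xi h)X$ is rewritten as $-2f(\nabla_\xi h)\phi X$ via the anticommutation relation; what survives is exactly the claimed formula. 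The work is purely computational; the one place that demands care is the second-order term $\mathcal{L}_\xi(\nabla_\xi h)$ and the bookkeeping of the numerous compositions of $\phi$ and $h$ through the (anti)commutation rules, so that the cancellations come out correctly.
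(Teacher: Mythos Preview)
Your argument is correct and complete: Lie-differentiating the $(1,1)$-tensor $F=R(\cdot,\xi)\xi$ directly from \eqref{2.10} via the Leibniz rule, together with the conversion formula $(\mathcal{L}_\xi T)=\nabla_\xi T + T\circ A - A\circ T$, does yield exactly the stated expression; the intermediate identities $\mathcal{L}_\xi h=\nabla_\xi h+2\phi h^2$ and $\mathcal{L}_\xi(\nabla_\xi h)=\nabla_\xi\nabla_\xi h+\phi[h(\nabla_\xi h)+(\nabla_\xi h)h]$ are both right, and the cancellations you describe (the $h^2$- and $h(\nabla_\xi h)$-terms) go through.

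The paper, however, takes a different route. Instead of Lie-differentiating the algebraic expression \eqref{2.10}, it works through Yano's machinery for the Lie derivative of the Levi-Civita connection: first it computes $\eta((\mathcal{L}_\xi\nabla)(Y,Z))$ via the commutation formula \eqref{4.1}, then uses \eqref{3.5} to obtain $(\mathcal{L}_\xi\nabla)(X,\xi)=-\xi(f)\,\phi^2X-\phi(\nabla_\xi h)X$, and finally applies the curvature identity \eqref{3.8}, $(\mathcal{L}_\xi R)(X,\xi)\xi=(\nabla_X\mathcal{L}_\xi\nabla)(\xi,\xi)-(\nabla_\xi\mathcal{L}_\xi\nabla)(X,\xi)$. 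Your approach is more elementary and self-contained, needing only the Leibniz rule and the basic anticommutation identities; the paper's approach is heavier but is the same framework already deployed in Section~3 (equations \eqref{3.5}--\eqref{3.9}) and again in Section~5, so it buys uniformity of method across the three theorems.
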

\begin{proof}
 Obviously,  $\mathcal{L}_\xi\eta=0$ because $A\xi=0$.  Notice that for any vector fields $X,Y,Z$ the 1-form $\eta$  satisfies the following relation(\cite{Y}):
\begin{equation}\label{4.1}
  -\eta((\mathcal{L}_X\nabla)(Y,Z))=(\mathcal{L}_X(\nabla_Y\eta)-\nabla_Y(\mathcal{L}_X\eta)-\nabla_{[X,Y]}\eta)(Z).
\end{equation}
Putting $X=\xi$ and using \eqref{2.5} yields
\begin{align}\label{4.2}
 -\eta((\mathcal{L}_\xi\nabla)(Y,Z))=& (\mathcal{L}_\xi(\nabla_Y\eta)-\nabla_Y(\mathcal{L}_\xi\eta)-\nabla_{[\xi,Y]}\eta)(Z)\\
   = &\nabla_\xi((\nabla_Y\eta)(Z))-(\nabla_Y\eta)(\mathcal{L}_\xi Z)-g(A([\xi,Y]),Z)\nonumber\\
   =&\nabla_\xi g(AY,Z)-g(AY,[\xi,Z])-g(A([\xi,Y]),Z)\nonumber\\
   =&g((\nabla_\xi A)Y,Z)+2g(AY,AZ).\nonumber
\end{align}
In view of \eqref{3.5}, we obtain from \eqref{4.2} that
\begin{align*}
  g((\mathcal{L}_\xi\nabla)(X,\xi),Y)
  =&(\nabla_X\mathcal{L}_\xi g)(Y,\xi)-g((\mathcal{L}_\xi\nabla)(X,Y),\xi)\\
  =&-2g(AX,AY)+g((\nabla_\xi A)X,Y)+2g(AX,AY)\\
   =&\xi(f)g(\phi X,\phi Y)+g((\nabla_\xi h)X,\phi Y).
\end{align*}
That is,
\begin{equation*}
  (\mathcal{L}_\xi\nabla)(X,\xi)=-[\xi(f)]\phi^2 X-\phi(\nabla_\xi h)X.
\end{equation*}

Obviously, for any vector field $X$, we know $(\mathcal{L}_\xi\nabla)(X,\xi)=(\mathcal{L}_\xi\nabla)(\xi,X)$ from \eqref{3.6}.
Therefore we compute the Lie derivative of $R(X,\xi)\xi$ along $\xi$ as follows:
\begin{align*}\label{eq3.17}
(\mathcal{L}_\xi R)(X,\xi)\xi=&(\nabla_X\mathcal{L}_\xi\nabla)(\xi,\xi)-(\nabla_\xi\mathcal{L}_\xi\nabla)(X,\xi)\\
=&-(\mathcal{L}_\xi\nabla)(\nabla_X\xi,\xi)-(\mathcal{L}_\xi\nabla)(\xi,\nabla_X\xi)-(\nabla_\xi\mathcal{L}_\xi\nabla)(X,\xi)\nonumber\\
=&2[\xi(f)]\phi^2 AX+2\phi(\nabla_\xi h)AX-(\nabla_\xi\mathcal{L}_\xi\nabla)(X,\xi)\nonumber\\
=&2\xi(f)(f\phi^2X+\phi hX)+2[-f(\nabla_\xi h)\phi X-(\nabla_\xi h)hX]\nonumber\\
&+[\xi(\xi(f))]\phi^2X+\phi(\nabla_\xi\nabla_\xi h)X.\nonumber
\end{align*}
Here we used $\nabla_\xi\phi=0$ and $\phi\nabla_\xi h=-(\nabla_\xi h)\phi$ followed from $h\phi+\phi h=0$.
\end{proof}\bigskip

{\it Proof of Theorem \ref{T2}.} Now we suppose that the potential vector $V=\xi$
in the Ricci equation \eqref{1.1}. Then for any $X\in\mathfrak{X}(M)$,
\begin{equation}\label{4.3}
  -f\phi^2X-\phi hX+QX=\lambda X.
\end{equation}
Putting $X=\xi$ in \eqref{4.3}, we have
\begin{equation}\label{4.4}
 Q\xi=\lambda\xi.
\end{equation}
 Moreover, the above formula together \eqref{2.8} with $n=1$ leads to
\begin{equation}\label{4.5}
  \mathrm{trace}(h^2)=-\lambda-2f^2-2\xi(f).
\end{equation}

Since the curvature tensor of a 3-dimension Riemannian manifold is given by
\begin{align}\label{4.6}
  R(X,Y)Z=&g(Y,Z)QX-g(X,Z)QY+g(QY,Z)X-g(QX,Z)Y\\
           &-\frac{r}{2}\{g(Y,Z)X-g(X,Z)Y\},\nonumber
  \end{align}
where $r$ denotes the scalar curvature. Putting $Y=Z=\xi$ in \eqref{4.6} and
applying \eqref{4.3} and \eqref{4.4}, we obtain
\begin{align}\label{4.7}
  R(X,\xi)\xi=\Big(\frac{r}{2}+f-2\lambda\Big)\phi^2X+\phi hX.
\end{align}
Contracting the above formula over $X$ leads to $Ric(\xi,\xi)=-r-2f+4\lambda$, which follows from \eqref{4.4} that
\begin{equation}\label{4.8}
  r+2f=3\lambda.
\end{equation}

Taking the Lie derivative of \eqref{4.7} along $\xi$ and using \eqref{4.8}, we obtain
\begin{align}\label{4.9}
  (\mathcal{L}_\xi R)(X,\xi)\xi=2h^2X+\phi(\mathcal{L}_\xi h)X
\end{align}
since $\mathcal{L}_\xi\phi^2=2\phi h+2h\phi=0$ and $h=\frac{1}{2}\mathcal{L}_\xi\phi$.
By Lemma \ref{L1} and \eqref{4.9}, we get
\begin{align}\label{4.10}
2&\xi(f)\phi hX-2[f(\nabla_\xi h)\phi X+(\nabla_\xi h)hX]\\
&+[\xi(\xi(f))+2f\xi(f)]\phi^2X+\phi(\nabla_\xi\nabla_\xi h)X\nonumber\\
=&2h^2X+\phi(\mathcal{L}_\xi h)X.\nonumber
  \end{align}
By virtue of \eqref{2.7} and \eqref{4.7}, we have
\begin{equation}\label{4.11}
  (\nabla_\xi h)X=\Big( -\frac{\lambda}{2}-\xi(f)-f^2\Big)\phi X+(1-2f)hX-\phi h^2X.
\end{equation}
Making use of the above equation we further compute
\begin{align}\label{4.12}
  \phi(\nabla_\xi\nabla_\xi h)X=&\Big(-\xi\xi(f)-2f\xi(f)\Big)\phi^2 X-2\xi(f)\phi hX+(1-2f)\phi(\nabla_\xi h)X\\
                            &+(\nabla_\xi h)hX+ h(\nabla_\xi h)X.\nonumber
  \end{align}
As well as via \eqref{2.5} we get
\begin{align}\label{4.13}
  \phi(\mathcal{L}_\xi h)X=&\phi\mathcal{L}_\xi(hX)-\phi h([\xi,X])\\
  =&\phi(\nabla_\xi h)X+\phi (hA-Ah)X\nonumber\\
  =&\phi(\nabla_\xi h)X-2h^2X.\nonumber
\end{align}

Substituting \eqref{4.12} and \eqref{4.13} into \eqref{4.10}, we derive
\begin{align}\label{4.14}
   -(\nabla_\xi h)hX+h(\nabla_\xi h)X=0.
\end{align}
Further, applying \eqref{4.11} in the formula \eqref{4.14}, we get
\begin{align*}
  \Big(-\lambda-2f^2-2\xi(f)\Big)hX-h^3X=0.
\end{align*}
Write $\beta:=-\lambda-2f^2-2\xi(f)$, then the above equation is rewritten as $h^3X=\beta hX$ for every vector $X$.
Denote by $e_i$ and $\lambda_i$ the eigenvectors and the corresponding eigenvalues for $i=1,2,3$, respectively. If $h\neq0$ then there is a nonzero eigenvalue $\lambda_1\neq0$ satifying $\lambda_1^3=\beta \lambda_1$, i.e., $\lambda_1^2=\beta$. Since $\mathrm{trace}(h)=0$ and $\mathrm{trace}(h^2)=\beta$ by \eqref{4.4}, we have
$\sum_{i=1}^3\lambda_i=0$ and $\sum_{i=1}^3\lambda_i^2=\beta$. This shows $\lambda_2=\lambda_3=0,$ which further leads to $\lambda_1=0$. It is a contradiction.
Thus we have
\begin{equation}\label{4.15}
\beta=-(\lambda+2f^2+2\xi(f))=0\quad\hbox{and}\quad h=0.
\end{equation}

On the other hand, taking the covariant differentiation of $Q\xi=\lambda\xi$ (see \eqref{4.4}) along arbitrary vector field $X$ and
using \eqref{2.5}, one can easily deduce
\begin{equation*}
  (\nabla_XQ)\xi+Q(-f\phi^2X-\phi hX)=-\lambda(f\phi^2X+\phi hX).
\end{equation*}
Contracting this equation over $X$ and using \eqref{4.3}, we derive
\begin{equation*}
  \frac{1}{2}\xi(r)-2f^2-\mathrm{trace}(h^2)=0.
\end{equation*}
By virtue of \eqref{4.4} and \eqref{4.8}, the foregoing equation yields
\begin{equation}\label{4.16}
\xi(f)+\lambda=0.
\end{equation}
This shows that $\xi(f)$ is constant, then it infers from \eqref{4.15} that $f=0$ and $\lambda=0$, that means that $M$ is cosymplectic.
Therefore it follows from \eqref{2.5} that $(\mathcal{L}_\xi g)(X,Y)=2g(AX,Y)=0$ for any vectors $X,Y$.
By \eqref{4.3}, it is obvious that $Q=0$. Thus we complete the proof of Theorem \ref{T2}.\\

 By the proof of Theorem \ref{T2}, the following result is clear.
\begin{corollary}
A three-dimensional almost $\alpha$-Kenmotsu manifold $(M,\phi,\eta,\xi,g)$ does not admit a Ricci
 soliton with potential vector field being $\xi$.
\end{corollary}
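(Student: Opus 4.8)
The plan is to deduce this immediately from Theorem \ref{T2} together with the intermediate identities established in its proof. Recall that a three-dimensional almost $\alpha$-Kenmotsu manifold is precisely a three-dimensional almost $f$-cosymplectic manifold whose defining function is the constant $f\equiv\alpha$ with $\alpha\neq0$; in particular the smoothness condition $df\wedge\eta=0$ is automatic, and $\xi(f)=0$, $\xi(\xi(f))=0$.

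First I would assume, for contradiction, that such a manifold $M$ admits a Ricci soliton whose potential vector field is $\xi$. Then every step of the proof of Theorem \ref{T2} applies verbatim, since nothing there used $f\neq0$. In particular equations \eqref{4.15} and \eqref{4.16} hold, namely $\lambda+2f^2+2\xi(f)=0$ and $\xi(f)+\lambda=0$. Substituting $f=\alpha$ and $\xi(f)=0$ into \eqref{4.16} gives $\lambda=0$, and feeding this back into \eqref{4.15} yields $2\alpha^2=0$, hence $\alpha=0$, contradicting $\alpha\neq0$.

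Equivalently, and more conceptually: the proof of Theorem \ref{T2} shows that any three-dimensional almost $f$-cosymplectic manifold carrying such a Ricci soliton must satisfy $f\equiv0$, i.e. be cosymplectic; an almost $\alpha$-Kenmotsu structure has $f\equiv\alpha\neq0$, so no such soliton can exist. There is essentially no technical obstacle here — the corollary is a straightforward specialization of Theorem \ref{T2}, and the only point worth noting is that a nonzero constant $f$ cannot coexist with the constraints \eqref{4.15}--\eqref{4.16} forced by the soliton equation.
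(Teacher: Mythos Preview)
Your proposal is correct and follows essentially the same approach as the paper, which simply states that the corollary is clear from the proof of Theorem \ref{T2}. You have merely made explicit the specialization $f\equiv\alpha\neq0$ and spelled out how \eqref{4.15}--\eqref{4.16} force $\alpha=0$, which is exactly the intended argument.
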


\section{Proof of Theorem \ref{T3}}

In this section we assume that $M$ is a three dimensional $\eta$-Einstein almost
$f$-cosymplectic manifold, i.e., the Ricci tensor $Ric=ag+b\eta\otimes\eta$. We  first prove the following proposition.
\begin{proposition}\label{p2}
Let $M$ be a three dimensional $\eta$-Einstein almost $f$-cosymplectic manifold. Then  the following relation is satisfied:
\begin{equation*}
2\xi(f)+2f^2+(a+b)=0.
\end{equation*}
\end{proposition}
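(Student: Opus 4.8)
The plan is to compute the Ricci tensor contracted with $\xi$ in two different ways and compare. On the one hand, the $\eta$-Einstein condition $\mathrm{Ric}=ag+b\,\eta\otimes\eta$ gives immediately $\mathrm{Ric}(\xi,\xi)=a+b$ since $g(\xi,\xi)=\eta(\xi)=1$. On the other hand, the general formula \eqref{2.8} with $n=1$ reads $\mathrm{Ric}(\xi,\xi)=-2(\xi(f)+f^2)-\mathrm{trace}(h^2)$. Equating the two yields
\begin{equation*}
a+b=-2\xi(f)-2f^2-\mathrm{trace}(h^2),
\end{equation*}
so the proposition is equivalent to showing $\mathrm{trace}(h^2)=0$, i.e. $h=0$, for a three-dimensional $\eta$-Einstein almost $f$-cosymplectic manifold. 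Wait — that cannot be right in general, so the argument must instead produce the stated identity by a route that does not force $h=0$. Let me reconsider: the cleaner path is to avoid $\mathrm{trace}(h^2)$ altogether and exploit the full $\eta$-Einstein structure on a $3$-manifold.

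So, the approach I would actually take: first use the $3$-dimensional curvature identity \eqref{4.6} together with $\mathrm{Ric}=ag+b\,\eta\otimes\eta$ to write $R(X,\xi)\xi$ explicitly in terms of $a$, $b$, $r$, and then contract or compare with the intrinsic expression \eqref{2.10} for $R(X,\xi)\xi$ in an almost $f$-cosymplectic manifold. From the $\eta$-Einstein form one gets $Q\xi=(a+b)\xi$ and $QX=aX+b\eta(X)\xi$, and \eqref{4.6} with $Y=Z=\xi$ gives $R(X,\xi)\xi$ as a multiple of $\phi^2 X$ plus a multiple of $X$; matching this against \eqref{2.10}, which has the form $[\xi(f)+f^2]\phi^2X+2f\phi hX-h^2X+\phi(\nabla_\xi h)X$, and taking the $\phi^2$-component (equivalently, restricting to the contact distribution and symmetrizing to kill the $h$-terms, since $\phi h + h\phi = 0$ makes $2f\phi h X$ and $\phi(\nabla_\xi h)X - h^2 X$ trace/symmetry-separable) should isolate the scalar relation among $a,b,f,\xi(f)$.

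The concrete steps, in order: (1) record $Q\xi=(a+b)\xi$ and compute $r=\mathrm{trace}(Q)=3a+b$ from the $\eta$-Einstein condition; (2) substitute into \eqref{4.6} with $Y=Z=\xi$ to get $R(X,\xi)\xi=\bigl(\tfrac{r}{2}-(a+b)\bigr)\phi^2 X$ — using $\phi^2X=X-\eta(X)\xi$ — which simplifies the coefficient to $\tfrac{r}{2}-(a+b)=\tfrac{3a+b}{2}-(a+b)=\tfrac{a-b}{2}$; (3) contract \eqref{2.10} over $X$ and use \eqref{2.8} (or directly take $\mathrm{Ric}(\xi,\xi)$), getting $\mathrm{Ric}(\xi,\xi)=a+b$ on one side and $-2(\xi(f)+f^2)-\mathrm{trace}(h^2)$ on the other; (4) separately, contract the curvature identity from step (2): $\mathrm{Ric}(\xi,\xi)=\mathrm{trace}\bigl(X\mapsto R(X,\xi)\xi\bigr)=\tfrac{a-b}{2}\cdot(-2)=b-a$ — wait, $\mathrm{trace}(\phi^2)=\mathrm{trace}(-I+\eta\otimes\xi)=-2$ in dimension $3$. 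This gives $a+b$ versus something; carefully reconciling the sign bookkeeping across (3) and (4) is where I expect the real work to lie, and it should force $2\xi(f)+2f^2+(a+b)=0$ once the $\mathrm{trace}(h^2)$ terms are handled consistently (they appear with matching signs and cancel, or equivalently the $\eta$-Einstein hypothesis on a $3$-manifold makes the $h$-contribution to $\mathrm{Ric}(\xi,\xi)$ expressible through $a,b$).

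The main obstacle is the bookkeeping around $\mathrm{trace}(h^2)$: one must either show it drops out of the comparison, or express it via the $\eta$-Einstein data; getting the signs and the factor-of-$2$ from $n=1$ exactly right in \eqref{2.8} versus \eqref{2.10} is the delicate point, and that is where I would slow down and check every contraction.
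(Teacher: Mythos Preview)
Your proposal has a genuine gap: the comparison you outline cannot produce the proposition, because the $\mathrm{trace}(h^2)$ term does \emph{not} drop out. Your step (3) gives
\[
a+b=-2(\xi(f)+f^2)-\mathrm{trace}(h^2),
\]
which is weaker than the claim. Your step (4), comparing \eqref{2.10} against the expression for $R(X,\xi)\xi$ coming from \eqref{4.6}, is tautological: applying $\phi$ to \eqref{2.7} and using $\phi^2R(X,\xi)\xi=-R(X,\xi)\xi$, $\phi^2h^2=-h^2$ shows that \eqref{2.10} is literally equivalent to \eqref{2.7}. So once you feed $R(X,\xi)\xi=-\tfrac{a+b}{2}\phi^2X$ into \eqref{2.7} to obtain $(\nabla_\xi h)X$ and then insert that back into \eqref{2.10}, you simply recover $R(X,\xi)\xi=-\tfrac{a+b}{2}\phi^2X$. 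No new scalar relation emerges, and in particular nothing forces $\mathrm{trace}(h^2)$ to be expressible in terms of $a,b,f$. (There is also a sign slip: $\phi^2X=-X+\eta(X)\xi$, so the coefficient in $R(X,\xi)\xi$ from \eqref{4.6} is $-\tfrac{a+b}{2}$, not $\tfrac{a-b}{2}$.)

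The paper's proof works at one order higher. It computes $(\mathcal{L}_\xi R)(X,\xi)\xi$ in two ways: once by directly Lie-differentiating $R(X,\xi)\xi=-\tfrac{a+b}{2}\phi^2X$, and once via Lemma~\ref{L1}, which expresses $(\mathcal{L}_\xi R)(X,\xi)\xi$ through $\nabla_\xi h$ and $\nabla_\xi\nabla_\xi h$. After substituting the explicit formula for $\nabla_\xi h$ (obtained from \eqref{2.7} and $R(X,\xi)\xi=-\tfrac{a+b}{2}\phi^2X$), the comparison yields the third-order relation
\[
\Big[\xi(f)+f^2+\tfrac12(a+b)\Big]hX+h^3X=0,
\]
and an eigenvalue argument on $h$ (using $h\xi=0$, $\mathrm{trace}(h)=0$, and the expression for $\mathrm{trace}(h^2)$) finishes the proof. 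The essential idea you are missing is that a \emph{second} differentiation along $\xi$ is needed to extract information beyond the first-order identities \eqref{2.7}--\eqref{2.10}.
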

\begin{proof}
 Since $M$ is $\eta$-Einstein, we know that $Q\xi=(a+b)\xi$ and the scalar curvature $r=3a+b$.
Hence it follows from \eqref{4.6} that
\begin{align}\label{5.1}
R(X,\xi)\xi=&-\frac{a+b}{2}\phi^2X.
\end{align}

By Lie differentiating \eqref{5.1}, we derive
\begin{align}\label{5.2}
  (\mathcal{L}_\xi R)(X,\xi)\xi=-\frac{\xi(a+b)}{2}\phi^2X.
\end{align}
Also, it follows from \eqref{5.1} and \eqref{2.7} that
\begin{align}\label{5.3}
  (\nabla_\xi h)X=&-[\xi(f)+f^2+\frac{1}{2}(a+b)]\phi X-2fhX-\phi h^2X.
\end{align}
Moreover, we get
\begin{align}\label{5.4}
 \phi(\nabla_\xi\nabla_\xi h)X=&-\Big[\xi(\xi(f))+2f\xi(f)+\frac{\xi(a+b)}{2}\Big]\phi^2 X\\
  & -2\xi(f)\phi hX-2f\phi(\nabla_\xi h)X+(\nabla_\xi h)hX+ h(\nabla_\xi h)X.\nonumber
\end{align}
Therefore by using \eqref{5.3} and \eqref{5.4}, the formula of Lemma \ref{L1} becomes
\begin{align*}
(\mathcal{L}_\xi R)(X,\xi)\xi=&-\frac{\xi(a+b)}{2}\phi^2X-(\nabla_\xi h)hX+ h(\nabla_\xi h)X\nonumber\\
=&-\frac{\xi(a+b)}{2}\phi^2X+2\Big[\xi(f)+f^2+\frac{1}{2}(a+b)\Big]\phi hX+2\phi h^3X.
 \end{align*}
Combining this with \eqref{5.2} yields
\begin{equation}\label{5.5}
 \begin{aligned}
0=[\xi(f)+f^2+\frac{1}{2}(a+b)]hX+h^3X.
 \end{aligned}
 \end{equation}
As in the proof of \eqref{4.15}, the formula \eqref{5.5} yields the assertion.
\end{proof}

{\it Proof of Theorem \ref{T3}.} In view of Proposition \ref{p2},  we know $a+b=-2\xi(f)-2f^2$.
Because $(\mathcal{L}_V g)(Y,Z)=2\lambda g(Y,Z)-2g(QY,Z)$ for any vector fields $Y,Z$, we compute
\begin{align*}
  &(\nabla_X\mathcal{L}_V g)(Y,Z)=-2g((\nabla_XQ)Y,Z)\\
  =&-2g(X(a)Y+X(b)\eta(Y)\xi+b g(AX,Y)\xi+b\eta(Y)AX,Z).
\end{align*}
Hence
\begin{align*}
g((\mathcal{L}_V\nabla)(Y,Z),X)=&\frac{1}{2}\Big\{(\nabla_Z\mathcal{L}_V g)(Y,X)+(\nabla_Y\mathcal{L}_V g)(Z,X)-(\nabla_X\mathcal{L}_V g)(Y,Z)\Big\}\\
=&-\Big\{g(Z(a)Y+Z(b)\eta(Y)\xi+b g(AZ,Y)\xi+b\eta(Y)AZ,X)\\
&+g(Y(a)Z+Y(b)\eta(Z)\xi+b g(AY,Z)\xi+b\eta(Z)AY,X)\\
&-g(X(a)Y+X(b)\eta(Y)\xi+b g(AX,Y)\xi+b\eta(Y)AX,Z)\Big\}\\
=&-\Big\{Z(a)g(Y,X)+Z(b)\eta(Y)\eta(X)+Y(a)g(X,Z)+Y(b)\eta(Z)\eta(X)\\
&+2b\eta(X)g(AY,Z)-X(a)g(Y,Z)-X(b)\eta(Y)\eta(Z)\Big\}.
\end{align*}
That means that
\begin{align}\label{5.6}
  (\mathcal{L}_V\nabla)(Y,Z) &=-Z(a)Y-Z(b)\eta(Y)\xi-Y(a)Z-Y(b)\eta(Z)\xi\\
&-2b g(AY,Z)\xi+g(Z,Y)\nabla a+\eta(Y)\eta(Z)\nabla b.\nonumber
\end{align}
Taking the covariant differentiation of $(\mathcal{L}_V\nabla)(Y,Z)$ along any vector field $X$, we may obtain
\begin{align*}
  (&\nabla_X\mathcal{L}_V\nabla)(Y,Z) \\
  &=-g(Z,\nabla_X\nabla a)Y-g(Z,\nabla_X\nabla b)\eta(Y)\xi-Z(b)g(AX,Y)\xi\\
  &-Z(b)\eta(Y)AX-g(Y,\nabla_X\nabla a)Z-g(Y,\nabla_X\nabla b)\eta(Z)\xi-Y(b)g(AX,Z)\xi\\
  &-Y(b)\eta(Z)AX-2X(b) g(AY,Z)\xi-2b g((\nabla_XA)Y,Z)\xi-2b g(AY,Z)AX\\
&+g(Z,Y)\nabla_X\nabla a+g(AX,Y)\eta(Z)\nabla b+\eta(Y)g(AX,Z)\nabla b+\eta(Y)\eta(Z)\nabla_X\nabla b.
\end{align*}
Thus by virtue of \eqref{3.8} we have
\begin{equation*}\begin{aligned}
(&\mathcal{L}_VR)(X,Y)Z \\
=&(\nabla_X\mathcal{L}_V\nabla)(Y,Z)-(\nabla_Y\mathcal{L}_V\nabla)(X,Z)\\
=&-g(Z,\nabla_X\nabla a)Y-g(Z,\nabla_X\nabla b)\eta(Y)\xi-Z(b)\eta(Y)AX-Y(b)g(AX,Z)\xi\\
  &-Y(b)\eta(Z)AX-2X(b) g(AY,Z)\xi-2b g((\nabla_XA)Y,Z)\xi-2b g(AY,Z)AX\\
&+g(Z,Y)\nabla_X\nabla a+\eta(Y)g(AX,Z)\nabla b+\eta(Y)\eta(Z)\nabla_X\nabla b\\
&-\Big[-g(Z,\nabla_Y\nabla a)X-g(Z,\nabla_Y\nabla b)\eta(X)\xi-Z(b)\eta(X)AY-X(b)g(AY,Z)\xi\\
  &-X(b)\eta(Z)AY-2Y(b) g(AX,Z)\xi-2b g((\nabla_YA)X,Z)\xi-2b g(AX,Z)AY\\
&+g(Z,X)\nabla_Y\nabla a+\eta(X)g(AY,Z)\nabla b+\eta(X)\eta(Z)\nabla_Y\nabla b\Big]
\end{aligned}\end{equation*}
since $g(X,\nabla_Y\nabla\zeta)=g(Y,\nabla_X\nabla\zeta)$ for any function $\zeta$ and vector fields $X,Y$ followed from Poincare lemma.

By contracting over $X$ in the previous formula, we have
\begin{align}\label{5.7}
  (&\mathcal{L}_VRic)(Y,Z) \\
=&g(Z,\nabla_Y\nabla a)-g(Z,\nabla_\xi\nabla b)\eta(Y)-2fZ(b)\eta(Y)\nonumber\\
  &-2fY(b)\eta(Z)-2\xi(b) g(AY,Z)-2b g((\nabla_\xi A)Y,Z)-4fb g(AY,Z\nonumber)\\
&+g(Z,Y)\Delta a+\eta(Y)g(A\nabla b,Z)+\eta(Y)\eta(Z)\Delta b\nonumber\\
&-\Big[-g(Z,\nabla_Y\nabla b)-AY(b)\eta(Z)+\eta(Z)g(\nabla_Y\nabla b,\xi)\Big].\nonumber
\end{align}

On the other hand, since $QX=a X+b\eta(X)\xi$ and $r=3a+b$, the equation \eqref{4.6} is expressed as
\begin{align}\label{5.8}
R(X,Y)Z=&b g(Y,Z)\eta(X)\xi-b g(X,Z)\eta(Y)\xi+b\eta(Y)\eta(Z)X-b\eta(X)\eta(Z)Y\\
&+\frac{a-b}{2}[g(Y,Z)X-g(X,Z)Y].\nonumber
\end{align}
Thus we get the Lie derivative of $R(X,Y)Z$ along $V$  from \eqref{5.8}
\begin{align}\label{5.9}
(&\mathcal{L}_VR)(X,Y)Z\\
=&V(b) g(Y,Z)\eta(X)\xi+b(\mathcal{L}_Vg)(Y,Z)\eta(X)\xi\nonumber\\
&+b g(Y,Z)(\mathcal{L}_V\eta)(X)\xi+b g(Y,Z)\eta(X)\mathcal{L}_V\xi\nonumber\\
&-[V(b) g(X,Z)\eta(Y)\xi+b(\mathcal{L}_Vg)(X,Z)\eta(Y)\xi\nonumber\\
&+b g(X,Z)(\mathcal{L}_V\eta)(Y)\xi+b g(X,Z)\eta(Y)\mathcal{L}_V\xi]\nonumber\\
&+V(b)\eta(Y)\eta(Z)X+b(\mathcal{L}_V\eta)(Y)\eta(Z)X+b\eta(Y)(\mathcal{L}_V\eta)(Z)X\nonumber\\
&-[V(b)\eta(X)\eta(Z)Y+b(\mathcal{L}_V\eta)(X)\eta(Z)Y+b\eta(X)(\mathcal{L}_V\eta)(Z)Y]\nonumber\\
&+\frac{V(a-b)}{2}[g(Y,Z)X-g(X,Z)Y]+\frac{a-b}{2}[(\mathcal{L}_Vg)(Y,Z)X-(\mathcal{L}_Vg)(X,Z)Y].\nonumber
\end{align}
Contracting over $X$ in \eqref{5.9} gives
\begin{align}\label{5.10}
(&\mathcal{L}_VRic)(Y,Z)\\
=&V(b) g(Y,Z)+\ b(\mathcal{L}_Vg)(Y,Z)+\ b g(Y,Z)(\mathcal{L}_V\eta)(\xi)\nonumber\\
&+\ b g(Y,Z)g(\mathcal{L}_V\xi,\xi)-[V(\ b)\eta(Z)\eta(Y)+\ b(\mathcal{L}_Vg)(\xi,Z)\eta(Y)\nonumber\\
&+ b\eta(Z)(\mathcal{L}_V\eta)(Y)+ b\eta(Y)g(\mathcal{L}_V\xi,Z)]\nonumber\\
&+3V( b)\eta(Y)\eta(Z)+3 b(\mathcal{L}_V\eta)(Y)\eta(Z)+3 b\eta(Y)(\mathcal{L}_V\eta)(Z)\nonumber\\
&-[V( b)\eta(Y)\eta(Z)+ b(\mathcal{L}_V\eta)(Y)\eta(Z)+ b\eta(Y)(\mathcal{L}_V\eta)(Z)]\nonumber\\
&+\frac{V(a- b)}{2}[2g(Y,Z)]+\frac{a- b}{2}[2(\mathcal{L}_Vg)(Y,Z)]\nonumber\\
=& b g(Y,Z)g(\mathcal{L}_V\xi,\xi)-[ b(\mathcal{L}_Vg)(\xi,Z)\eta(Y)+ b\eta(Y)g(\mathcal{L}_V\xi,Z)]\nonumber\\
&+V( b)\eta(Y)\eta(Z)+ b(\mathcal{L}_V\eta)(Y)\eta(Z)+2 b\eta(Y)(\mathcal{L}_V\eta)(Z)\nonumber\\
&+V( a)g(Y,Z)+ a(\mathcal{L}_Vg)(Y,Z)\nonumber\\
=& b g(Y,Z)g(\mathcal{L}_V\xi,\xi)-[2 b(\lambda- a- b)\eta(Z)\eta(Y)+ b\eta(Y)g(\mathcal{L}_V\xi,Z)]\nonumber\\
&+V( b)\eta(Y)\eta(Z)+ b[g(AY,V)+\eta(\nabla_YV)]\eta(Z)\nonumber\\
&+2 b\eta(Y)[g(AZ,V)+\eta(\nabla_ZV)]+V( a)g(Y,Z)\nonumber\\
&+ a\Big(2\lambda g(Y,Z)-2 a g(Y,Z)-2 b\eta(Y)\eta(Z)\Big).\nonumber
\end{align}
Finally, by comparing \eqref{5.10} with \eqref{5.7} we get
\begin{equation*}\begin{aligned}
 &g(Z,\nabla_Y\nabla a)-g(Z,\nabla_\xi\nabla b)\eta(Y)-2fZ( b)\eta(Y)\\
  &-2fY( b)\eta(Z)-2\xi( b) g(AY,Z)-2 b g((\nabla_\xi A)Y,Z)-4f b g(AY,Z)\\
&+g(Z,Y)\Delta a+\eta(Y)g(A\nabla b,Z)+\eta(Y)\eta(Z)\Delta b\\
&-\Big[-g(Z,\nabla_Y\nabla b)-AY( b)\eta(Z)+\eta(Z)g(\nabla_Y\nabla b,\xi)\Big]\\
=& b g(Y,Z)g(\mathcal{L}_V\xi,\xi)-[2 b(\lambda- a- b)\eta(Z)\eta(Y)+ b\eta(Y)g(\mathcal{L}_V\xi,Z)]\\
&+V( b)\eta(Y)\eta(Z)+ b[g(AY,V)+\eta(\nabla_YV)]\eta(Z)\\
&+2 b\eta(Y)[g(AZ,V)+\eta(\nabla_ZV)]+V( a)g(Y,Z)\\
&+ a(2\lambda g(Y,Z)-2 a g(Y,Z)-2 b\eta(Y)\eta(Z)).
\end{aligned}\end{equation*}
Replacing $Y$ and $Z$ by $\phi Y$ and $\phi Z$ respectively leads to
\begin{align}\label{5.11}
 &g(\phi Z,\nabla_{\phi Y}\nabla a)-2\xi(b) g(A\phi Y,\phi Z)-2 b g((\nabla_\xi A)\phi Y,\phi Z)\\
 &-4f b g(A\phi Y,\phi Z)+g(\phi Z,\phi Y)\Delta a-\Big[-g(\phi Z,\nabla_{\phi Y}\nabla b)\Big]\nonumber\\
=& b g(\phi Y,\phi Z)g(\mathcal{L}_V\xi,\xi)+V( a)g(\phi Y,\phi Z)+2 a(\lambda - a)g(\phi Y,\phi Z).\nonumber
\end{align}
Because $\sum_{i=1}^3(\mathcal{L}_V(e_i,e_i))=0$ (see \cite[Eq.(9)]{G}), by \eqref{5.6} we obtain the gradient field
\begin{equation}\label{5.12}
  \nabla( a+ b)=2[\xi( b)+2f b]\xi.
\end{equation}
Therefore the formula \eqref{5.11} can be simplified as
\begin{align}\label{5.13}
-2& b g((\nabla_\xi A)\phi Y,\phi Z)+g(\phi Z,\phi Y)\Delta a\\
=& b g(\phi Y,\phi Z)g(\mathcal{L}_V\xi,\xi)+V( a)g(\phi Y,\phi Z)+2 a(\lambda - a)g(\phi Y,\phi Z).\nonumber
\end{align}

Moreover, using \eqref{2.5} and \eqref{5.3} we compute
\begin{align*}
  (\nabla_\xi A)\phi Y=&-\xi(f)\phi^3Y+\phi^2(\nabla_\xi h)Y\\
  =&\xi(f)\phi Y+\Big(\frac{ a+ b}{2}+\xi(f)+f^2\Big)\phi Y+2fhY+\phi h^2Y\\
  =&\Big(\frac{ a+ b}{2}+2\xi(f)+f^2\Big)\phi Y+2fhY+\phi h^2Y.
\end{align*}
Substituting this into \eqref{5.13} yields
\begin{align}\label{5.14}
-2& b g(2fhY+\phi h^2Y,\phi Z)\\
=&\Big\{ b g(\mathcal{L}_V\xi,\xi)+2b\Big[\frac{ a+ b}{2}+2\xi(f)+f^2\Big]\nonumber\\
&-\Delta a+V( a)+2 a(\lambda - a)\Big\}g(\phi Y,\phi Z).\nonumber
\end{align}
Replacing $Z$ by $\phi Y$ in \eqref{5.14} gives
\begin{equation}\label{5.15}
 0= b g(2fhY+\phi h^2Y, \phi^2Y )=- b g(2fhY+\phi h^2Y, Y).
\end{equation}

Next we divide into two cases.

{\bf Case I:} $h=0$. Then the Eq.\eqref{2.8} and
Proposition \ref{p2} imply $\xi(f)=0$, i.e., $f$ is constant as $\nabla f=\xi(f)\xi$
 followed from $df\wedge\eta=0$, so $M$ is an $\alpha$-cosymplectic manifold.

 {\bf Case II:} $h\neq0$. Suppose that $Y=e$ is an unit eigenvector corresponding to the
 nonzero eigenvalue $\lambda'$ of $h$, then we may obtain from \eqref{5.15} that $f b=0$.
 If the function $b$ is not zero, there is an open neighborhood $\mathcal{U}$
 such that $b|_\mathcal{U}\neq0$, so $f|_\mathcal{U}=0.$ By Proposition \ref{p2} it implies $(a+b)|_\mathcal{U}=0$.
 Notice that $\mathrm{trace}(h^2)=-(a+b)-2(\xi f+f^2)$, so we have $\mathrm{trace}(h^2)|_\mathcal{U}=0$, i.e., $h|_\mathcal{U}=0.$
It comes to a contradiction, so $b=0$ and $ a$ is constant by \eqref{5.12}.
Hence by \eqref{1.3} we obtain $Ric=ag$, where $a=-2\xi(f)-2f^2.$
Moreover, we know $\lambda= a$ or $ a=0$ by \eqref{5.13}.
 Finally we show $ a\neq0$. In fact, if $ a=0$ then $\xi(f)=-f^2$. Substituting this into \eqref{2.8} we find $\mathrm{trace}(h^2)=-2(\xi(f)+f^2)=0$, which is impossible because $h\neq0$.

  Thus we obtain  from \eqref{5.8}
$$R(X,Y)Z=\frac{\lambda}{2}(g(Y,Z)X-g(X,Z)Y).$$
Summing up the above two cases we finish the proof Theorem \ref{T3}.
\section*{Acknowledgement}
The author would like to thank the referee for the comments and valuable suggestions.

%-------------------------------------------------------------------
%
% Here start the references; below, we include several formatting examples
% The entries should be arranged alphabetically, by the family name of the first author
% The titles which are issued in a foreign language should be replaced by their English translation
%    (please check the third example).
%

%
% Below are the data of an article Please note that:
%
% - the family name goes last, and that the names are followed by comma
% - the title is sentence-case, written in italics
% - the journal has the title abbreviated, normal-case; it is followed by volume, number
%   then (without separating comma) followed by the year - written in parentheses,
%   and after a comma, by the range of pages (first-last).

%-------------------------------------------------------------------
%

%
%-------------------------------------------------------------------
%
\end{document}